\documentclass[a4paper,12pt]{article}

\usepackage[utf8]{inputenc}
\usepackage{comment}
\usepackage{amstext}
\usepackage{amsfonts}
\usepackage{amsthm}
\usepackage{textcomp}
\usepackage{amssymb}
\usepackage{amscd}
\usepackage{amsmath}
\usepackage{xcolor}
\usepackage{tikz-cd}
\usepackage{caption}
\usepackage{enumitem}
\usepackage{theoremref}
\setlist[enumerate]{label=({\arabic*})}

\newtheorem{defn}{Definition}[section]

\newtheorem{lem}[defn]{Lemma}

\newtheorem{prop}[defn]{Proposition}

\newenvironment{manualtheorem}[1]{%
  \manualtheoreminner
}{\endmanualtheoreminner}

\newcommand{\N}{\mathbb{N}}
\newcommand{\R}{\mathbb{R}}
\newcommand{\Z}{\mathbb{Z}}

\newcommand{\B}{{\mathcal B}}

\newcommand{\hc}[1]{{\mathcal {HC} {#1}}}
\newcommand{\hd}[1]{{\mathcal {HD} {#1}}}
\newcommand{\gh}[1]{{\mathcal {GH} {#1}}}

\DeclareMathOperator*\lowlim{\underline{lim}}
\DeclareMathOperator*\uplim{\overline{lim}}

\begin{document}
\title{Shift-like Operators on $L^p(X)$} 

\author{Emma D'Aniello \\
\and  
Udayan B. Darji \\ 
\and 
Martina Maiuriello  \\}

\newcommand{\Addresses}{{
  \bigskip
  \footnotesize

  E.~D'Aniello, \\
  \textsc{Dipartimento di Matematica e Fisica,\\ Universit\`a degli Studi della Campania ``Luigi Vanvitelli",\\
  Viale Lincoln n. 5, 81100 Caserta, ITALIA} \\
  \textit{E-mail address: \em emma.daniello@unicampania.it} 
  \medskip

  U.B.~Darji,\\
  \textsc{Department of Mathematics,\\ University of Louisville,\\
  Louisville, KY 40292, USA}\\
  \textit{E-mail address: \em ubdarj01@louisville.edu}
  \medskip

  M.~Maiuriello,\\
  \textsc{Dipartimento di Matematica e Fisica,\\ Universit\`a degli Studi della Campania ``Luigi Vanvitelli",\\
  Viale Lincoln n. 5, 81100 Caserta, ITALIA}\\
  \textit{E-mail address: \em martina.maiuriello@unicampania.it}

}}

\date{\today}
\maketitle


\begin{abstract}
 In this article we develop a general technique which takes a known characterization of a property for weighted backward shifts and lifts it up to a characterization of that property for a large class of operators on $L^p(X)$. We call these operators ``shift-like''. The properties of interest include chaotic properties such as Li-Yorke chaos, hypercyclicity, frequent hypercyclicity as well as properties related to hyperbolic dynamics such as shadowing, expansivity and generalized hyperbolicity. Shift-like operators appear naturally as composition operators on $L^p(X)$ when the underlying space is a dissipative measure system. In the process of proving the main theorem, we provide some results concerning when a property is shared by a linear dynamical system and its factors. 

\end{abstract}

\let\thefootnote\relax\footnote{\date{} \\
2010 {\em Mathematics Subject Classification:} Primary: 47A16, 47B33; Secondary: 37B05, 37C50, 54H20.\\
{\em Keywords:} $L^p$ spaces, Li-Yorke chaos, hypercyclicity, mixing, chaos, expansivity, shadowing property, composition operators, weighted shifts.}

\section{Introduction}

Weighted shifts is an important class of operators in linear dynamics. It is a tool for constructing examples and counterexamples as well as inspiring new conjectures. Many dynamical properties are studied and characterized for weighted shifts before the general theory of the dynamical property in question reveals itself. 

A systematic study of composition operators on $L^p(X)$ in the setting of linear dynamics was initiated in \cite{BayartDarjiPiresJMAA2018, BernardesDarjiPiresMM2020}. The motivation for the study of composition operators is to have a concrete and large class of operators which can be utilized as examples and counterexamples in linear dynamics. The class of  composition operators includes weighted shifts as a special case. It also includes other large classes of operators,  such as composition operators induced by measures on odometers \cite{BDDPOdometers}. 

Let us briefly recall the composition operators on $L^p(X)$, for $1\leq p < \infty$. We start with a $\sigma$-finite measure space $(X,{\mathcal B},\mu)$ and a bimeasurable invertible  map $f:X \rightarrow X$ such that the Radon-Nikodym derivative of $\mu \circ f$ with respect to $\mu$ is bounded below. Then, $T_f:L^p(X) \rightarrow L^p(X)$ defined by
\[ T_f (\varphi) = \varphi \circ f
\]
is a well-defined bounded linear operator on $L^p(X)$ known as the composition operator. 

For a general $(X,{\mathcal B},\mu, f, T_f)$ as above, Bayart, Pires and the second author \cite{BayartDarjiPiresJMAA2018}  gave  necessary and sufficient conditions on $f$ that guarantee hypercyclicity and mixing of $T_f$. Bernardes, Pires and the second author \cite{BernardesDarjiPiresMM2020} gave necessary and sufficient conditions on $f$ that guarantee that $T_f$ is Li-Yorke chaotic. These are very general characterizations and, as a specific case, when $X = \Z$ and $f$ is the $+1$-map, they yield well-known characterizations of hypercyclicity, mixing and Li-Yorke chaos for weighted backward shifts.

Unfortunately, the behaviors of  frequent hypercyclicity and the shadowing property in the setting of composition operators are complicated. This is not unexpected as it took some time to complete the characterization of frequent hypercyclicity even for weighted backward shifts. Using ideas of Bayart and Ruzsa \cite{BayartRuzsa}, the second author and Pires \cite{DarjiPires2021} gave a characterization of frequently hypercyclic operators among composition operators on $L^p(X)$ when the measure system is dissipative and of bounded distortion. 

In a parallel development concerning the shadowing property, Bernardes and Messaoudi \cite{BernardesMessaoudiETDS2020} gave a characterization of the shadowing property for the class of weighted backward shifts. The current authors of this article  \cite{D'AnielloDarjiMaiuriello} lifted their result to the setting of composition operators where the measure system is dissipative and of bounded distortion. 

Motivated by these results, we develop a general method which takes a characterization of a linear dynamical property for weighted backward shifts and translates  it into the setting of composition operators on $L^p(X)$ where the underlying system $(X,{\mathcal B},\mu, f)$ is a dissipative, measurable system of bounded distortion.

Before stating the main result of the article, let us fix some notation and terminology.   
Let $(X,{\mathcal B},\mu)$ be a 
$\sigma$-finite measure space and $f:X \rightarrow X$ be a bimeasurable invertible map such that the Radon-Nikodym derivatives of $\mu \circ f$ and $\mu \circ f^{-1}$ with respect to $\mu$ are bounded below. In particular, this implies that the composition operators $T_f$ and $T_{f^{-1}}$ are well-defined, invertible, bounded linear operators with $T_f^{-1} = T_{f^{-1}}$. We say that $(X,{\mathcal B},\mu, f)$ is a {\em dissipative system generated by $W$} if $W \in \B$, $0 < \mu (W) < \infty$ and $X = \dot {\cup} _{k=-\infty}^{+ \infty} f^k (W)$, where the symbol $\dot {\cup}$ denotes pairwise disjoint union (we are slightly deviating from the usual definition of dissipative system, i.e.,  $\mu(W) < \infty$ is usually not required).  
Moreover, if there exists $K>0$ such that
\begin{equation}
 \dfrac{1}{K} \dfrac{\mu(f^k(W))}{\mu(W)}\ \leq \dfrac{\mu(f^k(B))}{\mu(B)}\ \leq K \dfrac{\mu(f^k(W))}{\mu(W)}\, \tag{$\Diamond$}
\end{equation}
for all $k \in \mathbb Z$ and  all measurable $B \subseteq W$ with $\mu(B)>0$, then we say that $(X,{\mathcal B},\mu, f)$ is a {\em dissipative system of bounded distortion generated by $W$.}    We will call $(X,{\mathcal B},\mu, f, T_f)$ {\em a dissipative composition dynamical system of bounded distortion, generated by 
set $W$} \cite[Definition 2.6.3]{D'AnielloDarjiMaiuriello}.
It is easy to see a connection between such a system and weighted backward shifts. Namely, if we let $B_{w}$ be the bilateral weighted backward shift on $\ell ^p(\Z)$, with the weight sequence \begin{equation} \label{wk}
    w_{k} = {\left(\frac{\mu(f^{k-1}(W))}{\mu(f^{k}(W))}\right)}^{\frac{1}{p}},
    \end{equation}
then $B_w$ is a linear factor of $T_f$ in a natural way. More specifically,
$\Gamma:L^p(X) \rightarrow \ell^p(\Z)$, defined by
\begin{equation}
  \Gamma (\varphi) (k)  = \dfrac{\mu(f^{k}(W))^{\frac{1}{p}}}{\mu(W)} \int_{W}  \varphi \circ f^{k} d \mu,  
\end{equation}
shows that $B_w$ is a linear factor of $T_f$ \cite[Lemma~4.2.3]{D'AnielloDarjiMaiuriello}, i.e., the following diagram commutes:
\begin{figure}[ht] 
\centering
\begin{tikzcd}
L^p(X)  \arrow[r, "T_f"] \arrow[d, "\Gamma"]
& L^p(X)  \arrow[d, "\Gamma"] \\ \ell^p({\mathbb Z}) \arrow[r,  "B_w"]
&\ell^p({\mathbb Z})
\end{tikzcd} 
\captionsetup{justification=centering,margin=2cm}
\label{Figsemiconj2}
\end{figure} \\

We now state the main result of the paper.
\begin{manualtheorem}M
The operator $T_f$ has Property P, if and only if $B_w$ has Property P where Property P denotes one of: (1) Li-Yorke chaos; (2) hypercyclicity; (3) mixing; (4) chaos; (5) frequent hypercyclicity; (6) expansivity; (7) uniform expansivity; (8) the shadowing property.
\end{manualtheorem}
As the operator $T_f$ of a dissipative composition dynamical system of bounded distortion $(X,{\mathcal B},\mu, f, T_f)$  behaves similarly to shifts, we call these types of operators {\em shift-like.}

We refer the reader to \cite{D'AnielloDarjiMaiuriello,DarjiPires2021} for basic definitions and a fuller discussion of the relationship between weighted shifts, composition operators and the role played by dissipativity and bounded distortion.  

\section{Preliminary Definitions}
In this section we briefly recall basic definitions and terminology. Next to the  definitions, we provide references where one may find fuller and detailed discussions concerning the treated concepts.

Let $\{w_i\}_{i \in \Z}$ be a bounded sequence of positive numbers. A {\em weighted backward shift with weights $\{w_i\}_{i \in \Z}$} is a bounded linear operator $B_w: \ell^p(\Z) \rightarrow \ell^p(\Z)$ defined by
$B_w({\bold x})(i) = w_{i+1}x_{i+1}$.
Moreover, if $\{w_i\}_{i \in \Z}$ is bounded away from zero, then $B_w$ is invertible.

Throughout the paper, we frequently use  the composition operator representation of the weighted backward shift $B_w$. 
 More specifically, we have the following proposition.
 \begin{prop}\label{Tg} Every weighted backward shift $B_w$ is conjugate, by an isometry, to the composition operator $(\Z,{\mathcal P}(\Z), \nu, g, T_g)$,  where \[g(i) = i+1,  \]
\[ \nu(0)=1,  \ \ 
\nu(i)= \begin{cases}
 \dfrac{1}{(w_{1}\cdots w_i)^{p}}, \ \ \ \  \ \  \ \ i > 0 \\
 \left( w_{i+1}\cdots w_0\right)^{p}, \ \ \ \ i < 0.
 \end{cases}\]
 Moreover, for every $i \in \Z$, \[ w_i = \left(\frac{\nu(i-1)}{\nu(i)}\right)^{\frac{1}{p}},\] and when $B_w$ is given by (\ref{wk}), we have \[ \nu(i) =  \dfrac{\mu(f^{i}(W))}{\mu(W)}.\] 
 \end{prop}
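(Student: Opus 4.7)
The plan is to produce an explicit isometry $U:\ell^p(\Z)\to L^p(\Z,\mathcal P(\Z),\nu)$ intertwining $B_w$ and $T_g$, and let the conjugacy equation together with the isometry requirement determine $\nu$. I would parametrize the candidate as a diagonal map $U(\mathbf x)(i)=c_i x_i$ for some sequence of positive scalars $c_i$. The intertwining $T_g\circ U=U\circ B_w$ then forces
\[
c_{i+1}x_{i+1}=(T_g U\mathbf x)(i)=(UB_w\mathbf x)(i)=c_i w_{i+1}x_{i+1},
\]
i.e.\ $c_{i+1}=c_i w_{i+1}$. The isometry condition $\|\mathbf x\|_{\ell^p}^p=\|U\mathbf x\|_{L^p(\nu)}^p=\sum_i|c_i|^p|x_i|^p\nu(i)$ forces $|c_i|^p\nu(i)=1$ pointwise. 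Normalising $c_0=1$ (so $\nu(0)=1$), the recurrence yields $c_i=w_1\cdots w_i$ for $i>0$ and $c_i=(w_{i+1}\cdots w_0)^{-1}$ for $i<0$, and hence precisely the stated formula for $\nu$.

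Next I would verify that $T_g$ is actually a well-defined bounded operator on $L^p(\Z,\mathcal P(\Z),\nu)$: since boundedness of $T_g$ amounts to the Radon–Nikodym derivative $d(\nu\circ g)/d\nu$ being bounded, a direct computation gives $\nu(i+1)/\nu(i)=w_{i+1}^{-p}$, which is bounded because $\{w_i\}$ is bounded and, when one needs invertibility, bounded away from zero. The formula $w_i=(\nu(i-1)/\nu(i))^{1/p}$ is then just the ratio calculation $\nu(i-1)/\nu(i)=w_i^p$, which I would check in the three cases $i>0$, $i=0$, $i<0$; each is an immediate telescoping.

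For the last assertion, assuming $B_w$ is given by (\ref{wk}), I would observe that
\[
\frac{\nu(i)}{\nu(i+1)}=w_{i+1}^p=\frac{\mu(f^i(W))}{\mu(f^{i+1}(W))},
\]
so the quotient $\nu(i)/\mu(f^i(W))$ is independent of $i\in\Z$. Evaluating at $i=0$ gives $\nu(0)/\mu(W)=1/\mu(W)$, whence $\nu(i)=\mu(f^i(W))/\mu(W)$ for all $i$.

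There is essentially no conceptual obstacle here; the entire statement is a translation of ``weights'' into ``point masses'' via a diagonal isometry, and the only place one has to be slightly careful is checking that $T_g$ is well-defined and bounded on $L^p(\nu)$, which reduces exactly to the hypothesis that the weight sequence is bounded (and bounded away from zero in the invertible case).
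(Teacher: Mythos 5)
Your proposal is correct and is essentially the paper's argument: the paper simply exhibits the inverse of your diagonal map, namely $\mathbf{x}\in L^p(\Z,\nu)\mapsto \{x_i\,\nu(i)^{1/p}\}_{i\in\Z}\in\ell^p(\Z)$, without derivation, and your recurrence $c_{i+1}=c_iw_{i+1}$ with $c_i=\nu(i)^{-1/p}$ reproduces exactly the stated $\nu$ and the two ``moreover'' formulas. One small slip in your well-definedness check: boundedness of $T_g$ is governed by $\nu(i-1)/\nu(i)=w_i^{p}$ (i.e.\ $d(\nu\circ g^{-1})/d\nu$), which is bounded because the weights are bounded above, whereas the ratio you compute, $\nu(i+1)/\nu(i)=w_{i+1}^{-p}$, controls $T_{g^{-1}}$ and needs the weights bounded away from zero; in the invertible setting of the paper both conditions hold, so nothing is lost.
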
 
\begin{proof}
That $T_g$ is conjugate to $B_w$ is witnessed by the isometry
\[ {\bf x } \in L^{p}(\Z) \mapsto  \{x_i (\nu(i))^{\frac{1}{p}}\}_{i \in \Z} \in \ell^p(\Z)
\]
\end{proof}

Let $T : X \rightarrow X$ be a bounded linear operator acting on a separable Banach space $X$. 

\begin{defn} The operator $T:X \rightarrow X$ is said to be
\begin{itemize}
\item {\em \cite[Theorem 5]{ BermudezBonillaMartinezPerisJMAA2011} Li-Yorke chaotic } if $T$ admits an {\em irregular vector}, that is, a vector $x \in X$ such that
$ \lowlim_{n \rightarrow \infty} \Vert T^n(x) \Vert =0 \text{ and } \uplim_{n \rightarrow \infty} \Vert T^n(x)\Vert = \infty; $
\item {\em hypercyclic} if $T$  admits a {\em hypercyclic vector}, i.e., if there exists $x$ in $X$ such that $\{T^n(x): n \in \N\}$ is dense in $X$. Moreover if the set of periodic points of $T$ is dense in $X$, then $T$ is said to be {\em chaotic};
\item {\em topologically mixing} if for any pair of non-empty open subsets $U$, $V$ of $X$, there is $k_{0}  \in {\mathbb N}$ such that $T^{k}(U) \cap V \not= \emptyset$, for all $k \geq k_{0}$;

\item {\em frequently hypercyclic} if $T$ admits a {\em frequently hypercyclic vector}, i.e., a vector $x \in X$ such that for each non-empty open subset $U$ of $X$, 
\[\lowlim_{N \rightarrow \infty} \frac{1}{N} \# \{1\leq n \leq N:T^{n}(x) \in U\}>0.\]
\end{itemize}
\end{defn}

The definition of Li-Yorke chaos stated above is equivalent to the standard definition of Li-Yorke chaos  (\cite{BermudezBonillaMartinezPerisJMAA2011, BernardesBonillaMullerPerisETDS2015}). We refer the reader to \cite{BayartMatheronCTM2009, GrosseErdmannMaguillot2011} for further information concerning the other definitions stated above.

Expansivity and the shadowing property play a fundamental role in hyperbolic dynamics. In the context of linear dynamics, they have  alternate formulations equivalent to the ones we use \cite{BernardesCiriloDarjiMessaoudiPujalsJMAA2018}.

\begin{defn}
 An invertible operator $T: X \rightarrow X$ 
\begin{itemize}
    \item is  {\em expansive } if for each $x$ with $\Vert x \Vert =1$, there exists $n \in \mathbb Z$ such that $\Vert T^n x\Vert \geq 2$;
    \item  is  {\em uniformly expansive } if there exists $n \in \mathbb N$ such that \[  z : 
    \Vert z \Vert =1   \Longrightarrow   \Vert T^nz \Vert \geq 2 \text{ or } \Vert T^{-n}z \Vert \geq 2;  \]
    \item  has  the {\em shadowing property} if  there is a constant $K>0$ such that, for every bounded sequence  $\{z_n\}_{n \in \Bbb Z}$ in $X$, there is a sequence $\{y_n\}_{n \in \Bbb Z}$ in $X$ such that \[ \sup_{n \in \Bbb Z} \Vert y_n \Vert \leq K \sup_{n \in \Bbb Z} \Vert z_n \Vert \hspace{0.3 cm}\text{ and } \hspace{0.3 cm} y_{n+1}=Ty_n + z_n, \text{ for all $n \in \Bbb Z$.}\] 
\end{itemize}
\end{defn}

In the definitions of expansivity and uniform expansivity, the number 2 can be replaced by any number $c>1$. For further information on this topic, we refer the reader to \cite{BernardesCiriloDarjiMessaoudiPujalsJMAA2018,BernadesMessaouidPAMS2020, BernardesMessaoudiETDS2020, CiriloGollobitPujals2020}.

\section{Properties Preserved  By Factors}
In this section, we discuss and state some results when a linear dynamical property is shared by factor maps. Throughout this section,  let $X$ and $Y$ be separable Banach spaces, $S$ and $T$ be bounded, invertible, linear operators on $X$ and $Y$, respectively, and $\Pi: X \rightarrow Y$ be a factor map, i.e., a linear, bounded, surjection for which the following diagram commutes:
\begin{figure}[ht] 
\centering
\begin{tikzcd}
X  \arrow[r, "S"] \arrow[d, "\Pi"]
& X \arrow[d, "\Pi"] \\ Y  \arrow[r,  "T"]
&Y
\end{tikzcd} 
\captionsetup{justification=centering,margin=2cm}
\end{figure} 


Next, we state a result which shows when a property carries over to the factor map. 
It is well-known and easy to show that if $S$ is hypercyclic, mixing, chaotic or frequently hypercyclic, then so is $T$ \cite{GrosseErdmannMaguillot2011}. We show that if some extra conditions hold on $\Pi$, then many of the dynamical properties pass from $S$ to $T$. 

In the sequel, given two sequences of positive reals, $\{a_{k}\}_{k \in \mathbb Z}$ and $\{b_{k}\}_{k \in \mathbb Z}$, and a constant $L\geq 1$, 
by $a_{k} \sim_{L} b_{k}$, we mean that, for every $k \in \mathbb Z$, 
\[a_k  \leq Lb_{k} \, \,\, \ \  \& \, \  \ \,\, b_k \leq L a_{k}.\] 
\begin{defn}\label{defSC} \label{defSBS} \label{select}
We say that 
 {\em $\Pi$ admits a strong bounded selector} if
\[\exists L \ge 1 \text{ s.t., } \forall y \in Y, \exists x \in {\Pi}^{-1}(y) \text{ with }\Vert S^n(x) \Vert \sim_L \Vert T^n(y) \Vert. \]

\end{defn}

We have the following result. 

\begin{lem} \label{lemSthenTchaosNEW}  \label{lemSthenT} \label{lemLY2}
For an arbitrary factor map $\Pi$,
\begin{enumerate}[series=myexample]
\item{if $S$ has the shadowing property, also $T$ has the shadowing property.}
\end{enumerate} 
If the factor map $\Pi$ admits a strong bounded selector, the following statements hold:
\begin{enumerate}[resume=myexample]
\item{if $S$ is expansive, then $T$ is expansive;}
\item{if $S$ is uniformly expansive, then $T$ is uniformly expansive;}
\end{enumerate} 
\end{lem}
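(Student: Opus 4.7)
My plan is to handle part (1) by the Open Mapping Theorem alone, which gives a (not necessarily strong) bounded selector for $\Pi$, and then to handle parts (2) and (3) by the strong bounded selector, exploiting the well-known freedom to replace the constant $2$ in the definitions of expansivity and uniform expansivity by any $c>1$.

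\textbf{Part (1): shadowing.} Since $\Pi\colon X\to Y$ is a bounded linear surjection between Banach spaces, the Open Mapping Theorem yields $M>0$ such that every $y\in Y$ has some $\tilde x\in\Pi^{-1}(y)$ with $\Vert\tilde x\Vert\leq M\Vert y\Vert$. Given a bounded sequence $\{z_n\}_{n\in\Z}$ in $Y$, I would first lift it to a bounded sequence $\{\tilde z_n\}_{n\in\Z}$ in $X$ using this selector. Applying the shadowing of $S$ (with constant $K_S$) produces $\{x_n\}_{n\in\Z}$ with $x_{n+1}=Sx_n+\tilde z_n$ and $\sup_n\Vert x_n\Vert\leq K_S M\sup_n\Vert z_n\Vert$. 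Setting $y_n:=\Pi(x_n)$, the commutative diagram gives $y_{n+1}=\Pi(Sx_n+\tilde z_n)=Ty_n+z_n$, and $\sup_n\Vert y_n\Vert\leq\Vert\Pi\Vert K_S M\sup_n\Vert z_n\Vert$. Hence $T$ has the shadowing property with constant $K_T:=\Vert\Pi\Vert K_S M$.

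\textbf{Parts (2) and (3): (uniform) expansivity.} I would first invoke the remark (already recorded in the paper) that in the definitions of expansivity and uniform expansivity the constant $2$ may be replaced by any $c>1$. Fix the constant $L\geq 1$ from the strong bounded selector and apply expansivity of $S$ with constant $2L^2$. Given $y\in Y$ with $\Vert y\Vert=1$, choose $x\in\Pi^{-1}(y)$ with $\Vert S^k(x)\Vert\sim_L\Vert T^k(y)\Vert$ for all $k\in\Z$; in particular $1/L\leq\Vert x\Vert\leq L$. Normalizing $x':=x/\Vert x\Vert$, expansivity of $S$ yields $n\in\Z$ with $\Vert S^n(x')\Vert\geq 2L^2$, whence $\Vert S^n(x)\Vert\geq(2L^2)/L=2L$, and finally $\Vert T^n(y)\Vert\geq\Vert S^n(x)\Vert/L\geq 2$. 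For part (3), the same argument works verbatim, except that the index $n$ supplied by uniform expansivity of $S$ is independent of the unit vector considered, so one reads off a single $n$ witnessing uniform expansivity of $T$; the symmetric alternative $\Vert S^{-n}(x')\Vert\geq 2L^2$ translates identically to $\Vert T^{-n}(y)\Vert\geq 2$.

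\textbf{Main obstacle.} There is no deep obstacle here, only constant bookkeeping: normalizing $x$ to a unit vector can shrink its norm by as much as a factor $L$, and the strong bounded selector loses another factor $L$, so the expansivity threshold for $S$ must be inflated from $2$ to $2L^2$ before the estimates close. This makes the flexibility of the expansivity constant not a cosmetic convenience but the crucial ingredient that allows the property to descend through the factor map.
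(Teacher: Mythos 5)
Your proof is correct and follows essentially the same route as the paper: part (1) via the Open Mapping Theorem selector (the paper cites its earlier Lemma 4.2.2 for the lifting argument you write out explicitly), and part (3) via normalizing the selected preimage and inflating the expansivity threshold to absorb the two factors of $L$, exactly as in the paper's Case 2 with $K=2L^2$. The only cosmetic difference is in part (2), where the paper invokes the characterization that $S$ is expansive iff $\sup_{n\in\mathbb Z}\Vert S^n(x)\Vert=\infty$ for every $x\neq 0$, which makes the conclusion immediate from $\Vert S^n(x)\Vert\sim_L\Vert T^n(y)\Vert$ without any constant bookkeeping, whereas you argue directly from the definition; both are valid.
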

\begin{proof}
(1). This was proved in \cite[Lemma~ 4.2.2]{D'AnielloDarjiMaiuriello} with the additional hypothesis that $\Pi$ admits a bounded selector, i.e.,
there exists $ L >0$ such that for all $ y \in Y$, there exists $ x \in {\Pi}^{-1}(y)$  with $\Vert x \Vert \leq L \Vert y \Vert$.
We thank Nilson Bernardes for pointing out that the Open Mapping Theorem implies that this is always the case. Indeed, let $\delta >0$ be such that $\overline{ {B}_Y(0,\delta)} \subseteq \Pi(B_X (0,1))$. Then, for any $y \in Y$, we have that there is $x \in \Pi^{-1}(y)$ with $\Vert x \Vert \le \frac{1}{\delta} \Vert y \Vert$.

(2). By hypothesis, there exists a constant $L\ge 1$ such that \[\forall y \in Y, \exists x \in {\Pi}^{-1}(y) \text{ with }\Vert S^n(x) \Vert \sim_L \Vert T^n(y) \Vert.\] Recall that, in general, an operator $S$ is expansive if and only if  $\sup _{n \in \mathbb Z} \Vert S^n  (x)\Vert = \infty$, for every $x \in X\setminus \{0\}$  \cite[Proposition 19]{BernardesCiriloDarjiMessaoudiPujalsJMAA2018}. As $S$ is expansive and $\Vert S^n(x) \Vert \sim_L \Vert T^n(y) \Vert$, then $\sup _{n \in \mathbb Z} \Vert T^n  (y)\Vert = \infty$, i.e., $T$ is expansive.

(3).  Let $K$ be a constant such that $\min \{ \dfrac{K}{L}, \dfrac{K}{L^2} \} \geq 2$. As $S$ is uniformly expansive, let $m \in \mathbb N$ be such that, for each $x \in X$ with $\Vert x \Vert = 1$, we have $\Vert S^m(x)\Vert \geq K$ or $\Vert S^{-m}(x)\Vert \geq K$. Now, let $y \in Y$ with $\Vert y \Vert = 1$. Then, there exists $x \in {\Pi}^{-1}(y)$ such that \[\Vert S^n(x) \Vert \sim_L \Vert T^n(y) \Vert.  \] 
We distinguish two cases. \\
{\bf Case 1.} $\Vert x \Vert =1$.  Then, for some $j\in \{m, -m\}$,
\[\Vert T^j(y)\Vert \geq \dfrac{1}{L}  \Vert S^j(x)\Vert \geq \dfrac{K}{L} \geq 2. \]
{\bf Case 2.} $\Vert x \Vert  \neq 1$. 
As $\left \Vert \dfrac{x}{\Vert x \Vert } \right \Vert = 1$, then, for some $j\in \{m, -m\}$, we have
\begin{eqnarray*}
 \Vert T^j(y) \Vert  \geq  \dfrac{1}{L} {\Vert S^j(x) \Vert} &=&
 \dfrac{1}{L} \left \Vert S^j \left (\frac{x}{\Vert x   \Vert} \right) \right\Vert  \Vert x \Vert \\
  &\geq&  \dfrac{1}{L}  K  \Vert x \Vert .\\
 \end{eqnarray*}
 As $\Pi(x) =y$, by the strong bounded selector condition, we have $\Vert x \Vert \ge \frac{1}{L} \Vert y \Vert = \frac{1}{L}$, implying  
 \begin{eqnarray*}
 \Vert T^j(y) \Vert  
  &\geq&  \dfrac{1}{L}  K  \dfrac{1}{L} = \frac{K}{L^2}
   \geq 2.
 \end{eqnarray*}
\end{proof}
Now, we prove that, in some cases, if a factor enjoys a property, then so does its extension.

\begin{lem} \label{lemLY1}
Suppose that $\Pi$ admits a strong bounded selector. If $T$ is Li-Yorke chaotic, then $S$ is Li-Yorke chaotic. 
\end{lem}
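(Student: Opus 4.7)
The plan is to use the characterization of Li-Yorke chaos via irregular vectors, combined with the two-sided comparability built into the definition of a strong bounded selector. Since $T$ is Li-Yorke chaotic, fix an irregular vector $y \in Y$, so that
\[\lowlim_{n \to \infty} \Vert T^n(y) \Vert = 0 \qquad \text{and} \qquad \uplim_{n \to \infty} \Vert T^n(y) \Vert = \infty.\]
Apply the strong bounded selector hypothesis to this particular $y$ to obtain a point $x \in \Pi^{-1}(y)$ with $\Vert S^n(x) \Vert \sim_L \Vert T^n(y) \Vert$ for all $n$. The candidate irregular vector for $S$ is this $x$, and the task reduces to reading off $\liminf = 0$ and $\limsup = \infty$ for $\{\Vert S^n(x)\Vert\}$ from the corresponding properties of $\{\Vert T^n(y)\Vert\}$.

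For this, each of the two inequalities hidden inside $\sim_L$ plays its own role. The bound $\Vert S^n(x)\Vert \leq L \Vert T^n(y)\Vert$ transports $\lowlim = 0$ from the factor to the extension: along any subsequence $n_k$ with $\Vert T^{n_k}(y)\Vert \to 0$, the quantity $\Vert S^{n_k}(x)\Vert$ is squeezed between $0$ and a sequence converging to $0$. Symmetrically, $\Vert T^n(y)\Vert \leq L \Vert S^n(x)\Vert$ forces $\uplim_n \Vert S^n(x)\Vert \geq \tfrac{1}{L}\, \uplim_n \Vert T^n(y)\Vert = \infty$. Combining the two conclusions shows that $x$ is an irregular vector of $S$, hence $S$ is Li-Yorke chaotic.

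There is essentially no technical obstacle here; the argument is a direct unpacking of definitions, and the main point to emphasize is \emph{why} one needs the full strength of the strong bounded selector rather than just a bounded selector. A bounded selector only controls $\Vert x \Vert$ by $\Vert y \Vert$ at a single point and does not by itself compare the orbits $\{S^n(x)\}$ and $\{T^n(y)\}$; without the two-sided iterate-by-iterate comparability $\sim_L$, neither the vanishing along a subsequence nor the unboundedness of the orbit can be transferred from $y$ back to a preimage. In this sense the lemma is the exact counterpart, in the opposite direction, to the expansivity statements in Lemma~\ref{lemSthenTchaosNEW}, and uses the same mechanism.
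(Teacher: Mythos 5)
Your proof is correct and follows exactly the paper's argument: pick an irregular vector $y$ for $T$, use the strong bounded selector to get $x \in \Pi^{-1}(y)$ with $\Vert S^n(x)\Vert \sim_L \Vert T^n(y)\Vert$, and conclude that $x$ is irregular for $S$. The paper compresses the final step into a single ``clearly,'' whereas you spell out which side of the $\sim_L$ comparison yields each of the two limit conditions, but the mechanism is identical.
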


\begin{proof}
By hypothesis, $T$ is Li-Yorke chaotic, that is, $T$ admits an irregular vector $y \in Y$, meaning that there exists $y \in Y$ such that 
\[\lowlim_{n \rightarrow \infty} \Vert T^{n}{y}\Vert = 0 \ \ \ \& \ \ \ \uplim_{n \rightarrow \infty} \Vert T^{n}{y}\Vert = + \infty.\]
By hypothesis, there exists a constant $L\ge 1$ and there exists $ x \in {\Pi}^{-1}(y)$ with $\Vert S^n(x) \Vert \sim_L \Vert T^n(y) \Vert$. 
Clearly, $x$ is an irregular vector for $S$, that is, $S$ is Li-Yorke chaotic.   
\end{proof}

Next, we show that $\Gamma$, as defined in the introduction, satisfies the strong bounded selector condition.

\begin{lem} \label{factorBw} 
The factor map $\Gamma :L^p(X) \rightarrow \ell^p(\Z)$ admits a strong bounded selector. 
\end{lem}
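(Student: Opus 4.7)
The plan is to exhibit an explicit selector that is piecewise constant on the partition $\{f^k(W)\}_{k \in \mathbb{Z}}$. Given $y = (y_k)_{k \in \mathbb{Z}} \in \ell^p(\mathbb{Z})$, I would define
\[ \varphi_y \;=\; \sum_{k \in \mathbb{Z}} \frac{y_k}{\mu(f^k(W))^{1/p}} \, \chi_{f^k(W)}. \]
Because $X$ is the disjoint union of the $f^k(W)$, a direct calculation gives $\Vert \varphi_y \Vert_p^p = \sum_k |y_k|^p = \Vert y \Vert_p^p$, so $\varphi_y \in L^p(X)$. To confirm $\Gamma(\varphi_y) = y$, observe that for $w \in W$ the function $\varphi_y \circ f^k$ takes the constant value $y_k / \mu(f^k(W))^{1/p}$ on $W$; substituting this into the defining integral for $\Gamma$ yields $\Gamma(\varphi_y)(k) = y_k$.

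The core of the argument is showing $\Vert T_f^n \varphi_y \Vert_p = \Vert B_w^n y \Vert_p$ for every $n \in \mathbb{Z}$, which gives the strong bounded selector condition with $L = 1$. Exploiting the piecewise constant structure, $T_f^n \varphi_y = \varphi_y \circ f^n$ equals the constant $y_{k+n}/\mu(f^{k+n}(W))^{1/p}$ on $f^k(W)$, so
\[ \Vert T_f^n \varphi_y \Vert_p^p \;=\; \sum_{k \in \mathbb{Z}} |y_{k+n}|^p \,\frac{\mu(f^k(W))}{\mu(f^{k+n}(W))}. \]
On the shift side, the telescoping product $w_{k+1} \cdots w_{k+n}$ of the weights from (\ref{wk}) collapses to $(\mu(f^k(W))/\mu(f^{k+n}(W)))^{1/p}$, giving $B_w^n y(k) = (\mu(f^k(W))/\mu(f^{k+n}(W)))^{1/p}\, y_{k+n}$. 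Taking the $\ell^p$ norm yields exactly the same expression as above, and the analogous telescoping (with $B_w^{-1}$ and $T_f^{-1} = T_{f^{-1}}$) handles $n < 0$.

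The main obstacle, such as it is, is careful bookkeeping of the telescoping weight products and the index shifts, particularly when $n$ is negative. It is worth remarking that bounded distortion is never invoked: by restricting attention to piecewise constant selectors, one sidesteps any need to compare $\mu$ on subsets of $W$ with $\mu(W)$, and the identities above become exact rather than merely equivalent up to a constant.
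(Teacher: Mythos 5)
Your proposal is correct and follows essentially the same route as the paper: the same piecewise constant selector $\varphi_y$, the same verification that $\Gamma(\varphi_y)=y$, and the same exact identity $\Vert T_f^n\varphi_y\Vert_p^p=\Vert B_w^n y\Vert_p^p$ via the telescoping weight products, yielding the strong bounded selector with $L=1$. Your closing observation that bounded distortion is not needed here is accurate and consistent with the paper's argument.
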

\begin{proof} By a small modification of Lemma 4.2.3 of \cite{D'AnielloDarjiMaiuriello}, we show that $\Gamma$ admits a strong bounded selector. 

 Let ${\bf x}= \{x_k\}_{k \in {\mathbb Z}} \in \ell ^p (\Z) $. 
We let $\varphi$ be as in the proof of Lemma 4.2.3 of \cite{D'AnielloDarjiMaiuriello}, i.e., 
\[\varphi = \sum_{k \in {\mathbb Z}} \frac{x_k}{\mu(f^{k}(W))^{\frac{1}{p}}}  \chi_{f^{k}(W)}.\]
It was shown there that $\Gamma (\varphi) ={\bf{x}}$. Hence, we only need to show that  there exists $L \ge 1$ such that  $\|T_f^n(\varphi)\|_p^p \sim_L \|B_w^n({\bf x})\|_p^p $. In fact, we will show that $\|T_f^n(\varphi)\|_p^p = \|B_w^n({\bf x})\|_p^p $.

Recalling that $w_k=\left (\frac{\mu(f^{k-1}(W))}{\mu(f^k(W))}\right)^{\frac{1}{p}}$, we have
\begin{align*}
\Vert B_w^n ({\bf x}) \Vert_p^p & =\sum_{k \in \mathbb Z} \vert w_{k+1} \cdot \ldots \cdot w_{k+n} x_{k+n} \vert^p = \sum_{k \in \mathbb Z}  \dfrac{\mu(f^k(W))}{\mu(f^{k+n}(W))} \vert x_{k+n} \vert^p.
\end{align*}
Moreover,
\begin{align*}
\Vert T_f^n (\varphi) \Vert_p^p & =\int_X \vert \varphi \vert ^p \circ f^n  d\mu \\
& = \int_{\cup_{h \in \mathbb Z}f^h(W)}  \left \vert \sum_{k \in \mathbb Z}  \frac{x_k}{\mu(f^{k}(W))^{\frac{1}{p}}}  \chi_{f^{k}(W)}\right \vert^p \circ f^n  d\mu  \\
& = \sum_{h \in \mathbb Z} \int_{f^h(W)} \left \vert \sum_{k \in \mathbb Z}  \frac{x_k}{\mu(f^{k}(W))^{\frac{1}{p}}}  \chi_{f^{k}(W)} \right \vert^p \circ f^n  d\mu  \\
& = \sum_{h \in \mathbb Z} \int_{f^h(W)} \left \vert  \frac{x_{h+n}}{\mu(f^{h+n}(W))^{\frac{1}{p}}}  \chi_{f^{h+n}(W)} \right \vert^p \circ f^n d\mu  \\
& = \sum_{h \in \mathbb Z} \frac{\vert x_{h+n} \vert^p }{\mu(f^{h+n}(W))} \int_{f^h(W)}  \chi_{f^{h+n}(W)} \circ f^n d\mu \\
& = \sum_{h \in \mathbb Z} \frac{\vert x_{h+n} \vert^p }{\mu(f^{h+n}(W))} \mu(f^h(W)) \\
&= \Vert B^n_w({\bf x}) \Vert_p^p.
\end{align*}
\end{proof}

\section{Proof for Li-Yorke Chaos}

By Lemma \ref{lemLY1} and Lemma \ref{factorBw}, we have that if $B_w$ is Li-Yorke chaotic, then so is $T_f$. We will now show that  the converse is also true. However, we prove a lemma first.
\begin{lem} \label{TNEW1} There exists $L \in \R$ such that for all $n \in \Z$ and $\varphi \in L^p(X)$, we have that 
\[\int_{f^{k}(W)} \vert \varphi \vert^p \circ f^{n} d \mu \sim_{L} \frac{\mu(f^{k}(W))}{\mu( f^{k+n}(W))} \left({\int_{f^{k+n}(W)} \vert \varphi \vert^p d \mu}\right).\]
\end{lem}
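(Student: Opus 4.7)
The plan is first to establish a generalized form of the bounded distortion estimate $(\Diamond)$ that applies to measurable subsets of $f^{n+k}(W)$ rather than just subsets of $W$, and then to bootstrap from indicator functions to arbitrary non-negative $L^p$ integrands.

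First, I would show that for all $k, n \in \Z$ and all measurable $B \subseteq f^{n+k}(W)$ with $\mu(B) > 0$,
\[\frac{1}{K^{2}}\,\frac{\mu(f^{k}(W))}{\mu(f^{n+k}(W))}\,\mu(B) \;\leq\; \mu(f^{-n}(B)) \;\leq\; K^{2}\,\frac{\mu(f^{k}(W))}{\mu(f^{n+k}(W))}\,\mu(B).\]
This is proved by writing $B = f^{n+k}(B')$ with $B' := f^{-(n+k)}(B) \subseteq W$ and applying $(\Diamond)$ twice: once with exponent $n+k$ to compare $\mu(B) = \mu(f^{n+k}(B'))$ to $\mu(B')$, and once with exponent $k$ to compare $\mu(f^{k}(B')) = \mu(f^{-n}(B))$ to $\mu(B')$. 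Multiplying the two estimates (after canceling the factor $\mu(B')$) yields the claim with $L := K^{2}$.

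Next, I would verify the lemma in the case $\varphi = \chi_{E}$ for some measurable $E \subseteq f^{n+k}(W)$. By dissipativity, $\chi_{E} \circ f^{n}$ is supported on $f^{k}(W)$ and equals $\chi_{f^{-n}(E)}$ there, so the left-hand side collapses to $\mu(f^{-n}(E))$. The right-hand side is $\frac{\mu(f^{k}(W))}{\mu(f^{n+k}(W))}\mu(E)$, and the generalized distortion estimate above matches these two quantities up to a factor of $K^{2}$. By linearity, the same $\sim_{L}$ inequality extends to non-negative simple functions supported on $f^{n+k}(W)$, and by monotone convergence to any non-negative measurable function there. Applying this to the non-negative function $|\varphi|^{p} \chi_{f^{n+k}(W)}$---which captures the only values of $|\varphi|^{p}$ relevant to the left-hand side, since $f^{n}$ maps $f^{k}(W)$ bijectively onto $f^{n+k}(W)$---completes the proof.

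The main obstacle I anticipate is stating the distortion estimate in the correct generalized form: $(\Diamond)$ is formulated only for $B \subseteq W$, so one must pull any $B \subseteq f^{n+k}(W)$ back into $W$ via $f^{-(n+k)}$ and compose two instances of the original estimate rather than trying to apply it directly. A minor but essential point is that the multiplicative relation $\sim_{L}$ is preserved only under non-negative linear combinations and monotone limits, which is why the statement is framed for $|\varphi|^{p}$ (non-negative) rather than for a signed integrand.
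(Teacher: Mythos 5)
Your proposal is correct and follows essentially the same route as the paper: reduce to simple functions supported on $f^{k+n}(W)$, pull the pieces back into $W$ via $f^{-(k+n)}$, apply the bounded distortion condition to compare $\mu(f^{k}(B_i))$ with $\mu(f^{k+n}(B_i))$, and pass to the limit. The only cosmetic differences are that you derive the two-sided distortion estimate directly from $(\Diamond)$ with constant $K^2$ where the paper cites it (as $(\Diamond\Diamond)$, with constant $L$) from prior work, and you invoke monotone rather than dominated convergence in the final limiting step.
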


\begin{proof}
As the system is of bounded distortion and it is generated by $W$, there exists $L \in \R$, \cite[Proposition~2.6.5] {D'AnielloDarjiMaiuriello}, such that 
\[ \dfrac{1}{L} \frac{\mu(f^{m}(W))}{\mu( f^l(W))}  \leq \dfrac{\mu(f^{m} (B))}{\mu(f^l(B))}   \leq L \frac{\mu(f^{m}(W))}{\mu (f^l(W))}  \tag{$\Diamond \Diamond$},\] for all $ l,m  \in \Z$, and all measurable $B \subseteq W$ with $\mu(B) >0$.

Let $\varphi \in L^{p}(X)$ and $n, k \in \Z$. Let us first prove the result for the case $\varphi|_{f^{k+n}(W)} = \sum_{i=1}^t a_i \chi_ {A_i}$ with $f^{k+n}(W) = \dot{\cup}_{i=1}^t A_i$, and letting  $B_i = f^{-(k+n)}(A_i)$.
Observe that 
\begin{align*}
  \int_{f^{k}(W)} \vert \varphi \vert^p \circ f^n d \mu  & = \int_{f^{k+n}(W)} \vert \varphi \vert^p d\mu (f^{-n}) \\
  & = 
\sum_{i=1}^{t} \vert{a _i}\vert^p {\mu (f^{-n}(A_{i}))}\\ & = \sum_{i=1}^{t} \vert{a _i}\vert^p {\mu (f^{-n}(f^{k+n}(B_{i}))}\\ & = \sum_{i=1}^{t} \vert{a _i}\vert^p {\mu (f^{k}(B_{i}))}. \tag{$\star$}
\end{align*}
Moreover,
\begin{align*}
  \int_{f^{k+n}(W)} \vert \varphi \vert^p  d\mu 
  & = 
\sum_{i=1}^{t} \vert{a _i}\vert^p {\mu (A_{i})}\\ & = \sum_{i=1}^{t} \vert{a _i}\vert^p {\mu (f^{k+n}(B_{i}))}. 
\tag{$\star \star$}
\end{align*}

 Now, applying ($\Diamond \Diamond$) to $B_i$ with $m = k$ and $l = k+n$, we have 
\begin{align*}
    \dfrac{1}{L} \sum_{i=1}^{t}\vert{a _i}\vert^p  \frac{\mu(f^{k}(W))}{\mu( f^{k+n}(W))} \mu(f^{k+n}(B_i)) & \leq  \sum_{i=1}^{t} \vert{a _i}\vert^p \mu(f^k(B_i))  \\ 
    &\leq L \sum_{i=1}^{t}\vert{a _i}\vert^p  \frac{\mu(f^{k}(W))}{\mu( f^{k+n}(W))} \mu(f^{k+n}(B_i)).
\end{align*}
Putting the last inequality together with ($\star$) and ($\star \star$), we obtain 
\begin{align*}
\dfrac{1}{L} \frac{\mu(f^{k}(W))}{\mu( f^{k+n}(W))} \left({\int_{f^{k+n}(W)} \vert \varphi \vert^p d \mu}\right) & \leq \int_{f^{k}(W)} \vert \varphi \vert^p \circ f^{n} d \mu\\
&\leq L \frac{\mu(f^{k}(W))}{\mu( f^{k+n}(W))} \left({\int_{f^{k+n}(W)} \vert \varphi \vert^p d \mu}\right),
\end{align*}
i.e., 
\[\int_{f^{k}(W)} \vert \varphi \vert^p \circ f^{n} d \mu \sim_{L} \frac{\mu(f^{k}(W))}{\mu( f^{k+n}(W))} \left({\int_{f^{k+n}(W)} \vert \varphi \vert^p d \mu}\right).\]

We obtain the proof for an arbitrary $\varphi$ by passing through the limit and applying the Lebesgue dominated convergence theorem.
\end{proof}

{\em Proof: $T_f$ Li-Yorke chaotic $\Rightarrow$ $B_w$ Li-Yorke chaotic.}
By hypothesis $T_f$ admits an irregular vector $\varphi \in L^p(X)$, meaning that there exists $\varphi \in L^p(X)$ such that 
\[\lowlim_{n \rightarrow \infty} \Vert T_f^{n}{\varphi}\Vert_p = 0 \ \ \ \& \ \ \ \uplim_{n \rightarrow \infty} \Vert T_f^{n}{\varphi}\Vert_p = + \infty.\]
We want to apply Corollary 1.5 in \cite{BernardesDarjiPiresMM2020} to show that $T_g$, as defined in Proposition \ref{Tg}, is Li-Yorke chaotic, or equivalently that $B_w$ is Li-Yorke chaotic.
Note that, using Lemma \ref{TNEW1}, we have, for each $n,k \in \Z$,
\begin{align*}
\Vert T_f^n \varphi \Vert_p^p= \int_X \vert \varphi \vert ^p \circ f^n d\mu & \geq    \int_{f^k(W)} \vert \varphi \vert ^p \circ f^n d\mu \\
& \geq \frac{1}{L} \frac{\mu(f^{k}(W))}{\mu( f^{k+n}(W))} \left({\int_{f^{k+n}(W)} \vert \varphi \vert^p d \mu}\right),
\end{align*}
and, in particular, for $k=-n$,  
\[\Vert T_f^n \varphi \Vert_p^p \geq \frac{1}{L} \frac{\mu(f^{-n}(W))}{\mu(W)} \left({\int_{W} \vert \varphi \vert^p d \mu}\right).\]
As $\lowlim_{n \rightarrow \infty} \Vert T_f^{n}{\varphi}\Vert_p = 0,$ then  $\lowlim_{n \rightarrow \infty} \mu(f^{-n}(W)) = 0,$ and, hence, \[\lowlim_{n \rightarrow \infty} \nu(g^{-n}(0))= \lowlim_{n \rightarrow \infty} \nu(-n)=\lowlim_{n \rightarrow \infty} \dfrac{\mu(f^{-n}(W))}{\mu(W)}=0,\]
that is, Condition $(a)$ of Corollary 1.5 in \cite{BernardesDarjiPiresMM2020} is satisfied, i.e. $\lowlim_{n \rightarrow -\infty} \nu(n)=0$. \\
Now, we show that Condition $(b)$ of Corollary 1.5 in \cite{BernardesDarjiPiresMM2020} is satisfied, i.e., \[\sup \left \{ \dfrac{\nu(h)}{\nu(h+n)}, h \in \Z, n \in \N \right \}= \infty, \] or, equivalently,
$\sup \left \{ \dfrac{\mu(f^h(W))}{\mu(f^{h+n}(W))}, h \in \Z, n \in \N \right \}= \infty.$
To obtain a contradiction, assume that $\dfrac{\mu(f^h(W))}{\mu(f^{h+n}(W))}$ is bounded above by a constant $H$. Hence, from Lemma \ref{TNEW1}, it follows
\[ \int_{f^h(W)} \vert \varphi \vert ^p \circ f^n d\mu \leq LH \int_{f^{h+n}(W)} \vert \varphi \vert ^p d\mu \ \ \ \text{ for every } h \in \Z, n \in \N ,\]
implying 
\[ \int_{X} \vert \varphi \vert ^p \circ f^n d\mu \leq LH \int_{X} \vert \varphi \vert ^p d\mu, \text{ for every } n \in \N, \] contradicting the fact that $\varphi$ is an irregular vector.

\section{Proof for Hypercyclicity and Mixing}

If $T_f$ is hypercyclic or mixing, then, using Propositions 1.13 and 1.40 of \cite{GrosseErdmannMaguillot2011}, it follows  that $B_w$ is hypercyclic or mixing, respectively.
To see the converse, we need the following two conditions.

 \begin{prop} \label{newconditiontransmix}
 The following statements hold.
 \begin{enumerate}
     \item If, for each $\epsilon >0$ and  $N \in \mathbb N$, there exists $k \geq 1$ such that, 
     for $h \in \{k,-k\}$,  \[ \mu(f^h(\cup_{\vert j \vert \leq N}f^j(W)))<\epsilon,\] then $T_f$ is hypercyclic.
     \item If, for each $\epsilon >0$ and  $N \in \mathbb N$, there exists $k_0 \in \mathbb N$, such that, for each $k \geq k_0$ and 
     $h \in \{ k,-k\}$,\[ \mu(f^h(\cup_{\vert j \vert \leq N}f^j(W)))<\epsilon,\]
then $T_f$ is mixing.
 \end{enumerate}

\end{prop}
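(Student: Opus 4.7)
The plan is to verify the Hypercyclicity Criterion (Gethner--Shapiro--Kitai type) for (1) and its mixing analogue for (2), both on the natural dense subspace of bounded $L^p$-functions supported in a finite union of shift translates of $W$. Set $E_N := \bigcup_{|j|\leq N}f^j(W)$. Since $T_f$ and $T_f^{-1}$ are bounded and $\mu(W) < \infty$, each $\mu(f^j(W))$ is finite, hence so is $\mu(E_N)$. From $X = \dot{\cup}_{k\in\Z}f^k(W) = \bigcup_N E_N$ it follows that $D := \bigcup_N\{\varphi \in L^\infty(X) : \operatorname{supp}(\varphi)\subseteq E_N\}$ is dense in $L^p(X)$.

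The core estimate is that for $\varphi$ supported in $E_N$ with $\|\varphi\|_\infty \leq M$ one has
\[ \|T_f^{k}\varphi\|_p^{p} = \int_X|\varphi\circ f^{k}|^p\,d\mu \leq M^{p}\mu(f^{-k}(E_N)), \qquad \|T_f^{-k}\varphi\|_p^{p} \leq M^{p}\mu(f^{k}(E_N)), \]
since $\varphi\circ f^{\pm k}$ is supported in $f^{\mp k}(E_N)$. Thus the assumption ``$\mu(f^h(E_N))<\epsilon$ for $h\in\{k,-k\}$'' translates directly into simultaneous control of $\|T_f^{\pm k}\varphi\|_p$ for all $\varphi\in D$ supported in $E_N$.

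For (1) I would construct a strictly increasing sequence $(k_j)_{j\ge 1}$ such that $\mu(f^{\pm k_j}(E_j))<1/j$, by applying the hypothesis at stage $j$ with $N=j$ and $\epsilon=1/j$. For any $\varphi\in D$, say supported in $E_{N_0}$, the inclusion $E_{N_0}\subseteq E_j$ for $j\ge N_0$ combined with the core estimate yields $T_f^{\pm k_j}\varphi\to 0$. Since $T_f$ is invertible, setting $u_j := T_f^{-k_j}\varphi$ gives $u_j\to 0$ and $T_f^{k_j}u_j = \varphi$, which is precisely the hypothesis of the Hypercyclicity Criterion; hence $T_f$ is hypercyclic. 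For (2), the stronger assumption together with the same norm estimate gives, for every $\varphi\in D$, $\|T_f^{\pm k}\varphi\|_p\to 0$ as $k\to\infty$, and the mixing form of the criterion (with $S_k := T_f^{-k}$) produces mixing of $T_f$.

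The main technical point is ensuring the $k_j$'s in (1) can in fact be arranged to tend to infinity. This uses that $\mu(f^k(W))>0$ for every $k\in\Z$, a consequence of $\mu(W)>0$ and the assumed boundedness-below of the Radon--Nikodym derivatives: no fixed $k^{*}$ can satisfy $\mu(f^{k^{*}}(E_N))<\epsilon$ for arbitrarily small $\epsilon$, so the admissible $k$'s in hypothesis (1) must be unbounded as $\epsilon\to 0$, allowing the inductive choice $k_j > k_{j-1}$. Everything else reduces to the routine norm estimate recorded above.
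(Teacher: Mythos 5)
Your proof is correct, but it takes a genuinely different route from the paper's. The paper deduces both statements from the measure-theoretic characterizations of hypercyclic and mixing composition operators (Theorems 1.1 and 1.2 of \cite{BayartDarjiPiresJMAA2018}): given $\epsilon>0$ and a set $B$ of finite positive measure, it truncates $B$ to $B'=\bigcup_{|j|\le N}(B\cap f^j(W))$ and observes that the hypothesis forces $\mu(f^{\pm k}(B'))\le\mu(f^{\pm k}(E_N))<\epsilon$, which is exactly the input those theorems require. You instead verify the Hypercyclicity Criterion (and Kitai's mixing criterion for part (2)) directly on the dense subspace $D$ of bounded functions supported in some $E_N$, using the invertibility of $T_f$ to supply the right inverses $S_{k_j}=T_f^{-k_j}$; your norm estimate $\Vert T_f^{\pm k}\varphi\Vert_p^p\le M^p\mu(f^{\mp k}(E_N))$ plays the role of the paper's set containment. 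Your approach is more self-contained, since it bypasses the cited characterizations entirely, and in case (1) it actually yields weak mixing, which is stronger than hypercyclicity; the paper's argument is shorter given the available machinery and stays within the framework used in the rest of the article. The one delicate point in your version---arranging $k_j\to\infty$---is handled correctly via the observation that $\mu(f^{\pm k}(E_N))\ge\mu(f^{\pm k}(W))>0$ for each fixed $k$, which forces the admissible $k$'s to leave every finite set as $\epsilon\to 0$.
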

\begin{proof} 
We will use characterizations of hypercyclicity and mixing given in \cite[Theorems 1.1 and 1.2]{BayartDarjiPiresJMAA2018}.

(1). Let $\epsilon >0$ and $B \in \cal B$, with $0<\mu(B) < \infty$. Let $N \in \mathbb N$ be so large 
that \[ \mu\left( B \setminus (\bigcup_{j=-N}^N( B \cap f^j(W)))\right) < \epsilon.\]
Define $B' =\cup_{j=-N}^N (B \cap f^j(W))$. Then, $\mu(B \setminus B') < \epsilon$. 
By hypothesis,  there exists $k \geq 1$ such that, for $h \in \{k,-k\}$, \[ \mu(f^h(\cup_{\vert j \vert \leq N}f^j(W)))<\epsilon.\]
Hence, 
\begin{eqnarray*}
\mu(f^h(B'))\le \mu(f^h (\cup_{j=-N}^N  f^j(W))) < \epsilon.  
\end{eqnarray*}

We have just shown that,  
for all $\epsilon >0$ and for all $B \in \cal B$ with $0<\mu(B) < \infty$, there exist 
$B' \subset B$ and $k \geq 1$ such that 
\[ \mu(B \setminus B')<\epsilon, \,\, \mu(f^{-k}(B'))<\epsilon \text{ and }\,\,  \mu(f^{k}(B'))<\epsilon. \] Now, it follows from 
Theorem 1.1 in  \cite{BayartDarjiPiresJMAA2018} that $T_f$ is hypercyclic. \\

(2). Let $\epsilon$, $B$, $N$, and $B'$ be as in part (1). Let $k_0$ be the integer guaranteed by the  hypothesis. Letting, for every $k > k_0$, $B_k =B'$, we have, for all $h \in \{k, -k\}$,

\[ \mu(B \setminus B_k)<\epsilon \ \text{ and }  \  \mu(f^{h}(B_k))<\epsilon.  \]

Now, by Theorem 1.2 of \cite{BayartDarjiPiresJMAA2018}, we have that $T_f$ is mixing.

\end{proof}

{\em Proof: $B_w$ hypercyclic $\Rightarrow$ $T_f$ is hypercyclic.}
The operator $T_g$ is hypercyclic by Proposition \ref{Tg}. We will show that hypothesis (1) of Proposition~\ref{newconditiontransmix} is satisfied by $T_f$, so that it is hypercyclic. To this end, let $\epsilon >0$ and $N \in \N$.
 Applying Theorem 1.1 of \cite{BayartDarjiPiresJMAA2018} to $B=\cup_{j=-N}^{N}  g^{j}(0)$ and  $\tilde{\epsilon}= 
 \min_{-N \le j \le N} \frac{1}{2} \{\dfrac{\epsilon}{\mu(W)},  \nu(g^{j}(0))\}$, we have that there exist $B' \subseteq B$ and $k \geq 1$ such that, for $h \in \{k,-k\}$, \[ \nu(B \setminus B')<\tilde{\epsilon}, \,\, \nu(g^{h}(B'))<\tilde{\epsilon}.
\]
 By our choice of $\tilde{\epsilon}$, we have that $B' = B = \cup_{j=-N}^N  g^{j}(0)$. In particular, we have, for $h \in \{k,-k\}$, \[\nu(g^{h}(\cup_{j=-N}^N  g^{j}(0)))= \sum_{j=-N}^N\nu(g^{h+j}(0)) < \tilde{\epsilon} <\dfrac{\epsilon}{\mu(W). } \tag{$\heartsuit$}\] 

Substituting  
\[\nu(g^{n}(0))=\nu(n)= \dfrac{\mu(f^{n}(W))}{\mu(W)}\]
in ($\heartsuit$), we obtain
\[ \sum_{j=-N}^N\nu(g^{h+j}(0)) =  \sum_{j=-N}^N  \frac{\mu(f^{h+j}(W))}{\mu(W)}  =\frac{\mu( f^{h}( \cup_{j=-N}^{N} f^{j}(W) ))}{\mu(W)}  < \dfrac{\epsilon}{\mu(W)},\] 
i.e.,
\begin{eqnarray*}
\mu( f^{h}( \cup_{j=-N}^{N} f^{j}(W) )) < \epsilon.
 \end{eqnarray*}
By Proposition \ref{newconditiontransmix}-(1),  $T_f$ is hypercyclic. \\

{\em Proof: $B_w$ mixing $\Rightarrow$ $T_f$ is mixing.}
We will show that hypothesis of (2) of Proposition~\ref{newconditiontransmix} is satisfied, so  that $T_f$ is mixing. We proceed as in the proof of hypercyclicity.
Let $\epsilon >0$ and  $N \in \N$. Let $B$ and ${\tilde{\epsilon}}$ as in the proof of hypercyclicity. 
As $B_w$ is mixing, applying Theorem 1.2 of \cite{BayartDarjiPiresJMAA2018} to $B$ and ${\tilde{\epsilon}}$, there exist $k_0 \geq 1$ and a sequence $\{B_k\}$ such that, for each $k \geq k_0$, $h \in \{-k,k\}$, 
\[ \nu(B \setminus B_k)<\tilde{\epsilon}, \,\, \nu(g^{h}(B_k))<\tilde{\epsilon}.\]
Then, by the choice of $\tilde{\epsilon}$, it must be $B_k=B$ for each $k \geq k_0$.  Hence, for each $k \geq k_0$ and $h \in \{ k,-k\}$, we have 
\[\nu(g^{h}(\cup_{j=-N}^N  g^{j}(0)))= \sum_{j=-N}^N\nu(g^{h+j}(0)) < \tilde{\epsilon} <\dfrac{\epsilon}{\mu(W)}.\] 
Hence, ($\heartsuit$) is satisfied for each $k \geq k_0$. Now, proceeding as in the proof of hypercyclicity, we have that the hypothesis of (2) of Proposition~\ref{newconditiontransmix} is satisfied.

\section{Proof for Chaos and Frequent Hypercyclicity}

It follows from Theorem~3.7 \cite{DarjiPires2021} that $T_f$ is chaotic if and only if $T_f$ is frequently hypercyclic. Hence, it suffices to show the result for frequently hypercyclic operators.

If $T_f$ is frequently hypercyclic, then that $B_w$ is frequently hypercyclic follows from the fact that frequent hypercyclicity is preserved by factor maps  \cite[Proposition 9.4]{GrosseErdmannMaguillot2011}. To see the converse, assume that $B_w$, equivalently $T_g$, is frequently hypercyclic. Note that $\nu$ is atomic and $g$ is ergodic on $\Z$. Applying Corollary~3.9 of \cite{DarjiPires2021}, we have $\nu(\Z) < \infty$. However, $\nu(\Z) = \dfrac{\mu(X)}{\mu(W)}$, implying $\mu(X) < \infty$. Now, applying  Theorem~3.3 of \cite{DarjiPires2021}, we have that $T_f$ is frequently hypercyclic, completing the proof.

\section{Proof for Expansivity and Uniform Expansivity}
By Lemma \ref{lemSthenT} and Lemma \ref{factorBw}, it follows that if $T_f$ is expansive or uniform expansive, then so is $B_w$. Now we will prove the converse. 

{\em Proof: $B_w$ expansive $\Rightarrow$ $T_f$ expansive.}
Assume that $B_w$, equivalently $T_g$, is expansive, i.e. $\sup_{n \in \Z}\Vert T_g^n \varphi \Vert_p= \infty,$ \cite[Proposition 19]{BernardesCiriloDarjiMessaoudiPujalsJMAA2018} for each $\varphi \in L^p(\Z)\setminus \{0\}.$ Hence, taking $\varphi =\chi_{\{0\}}$, it follows that
\begin{eqnarray*}
     \infty = \sup_{n \in \Z}\Vert T_g^n \varphi \Vert_p^p= \sup_{n \in \Z} \int_{\Z} \chi_{\{0\}} \circ g^n d\nu &=& \sup_{n \in \Z} \nu(g^{-n}(0)) \\
     &=& \sup_{n \in \Z} \nu(-n) = \sup_{n \in \Z} \dfrac{\mu(f^{-n}(W))}{\mu(W)}, 
     \end{eqnarray*}
i.e., $\sup_{n \in \Z} \mu(f^n(W))= \infty$. This will imply that $T_f$ is expansive. In fact, using Lemma \ref{TNEW1} with $k=-n$, for each $\varphi \in L^p(X) \setminus \{0\}$, we have 
\begin{align*}
    \Vert T_f^n \varphi \Vert_p^p= \int_X \vert \varphi \vert ^p \circ f^n d\mu & \geq    \int_{f^{-n}(W)} \vert \varphi \vert ^p \circ f^n d\mu \\
& \geq \dfrac{1}{L} \frac{\mu(f^{-n}(W))}{\mu(W)} \left({\int_{W} \vert \varphi \vert^p d \mu}\right).
\end{align*}
Now,
\[\sup_{n \in \Z} \Vert T_f^n \varphi \Vert_p^p \geq \dfrac{1}{L} \frac{{\int_{W} \vert \varphi \vert^p d \mu}}{\mu(W)}  \sup_{n \in \Z}(\mu(f^{-n}(W))) = \infty,\]
i.e., $T_f$ is expansive.

{\em Proof: $B_w$ uniform expansive $\Rightarrow$ $T_f$ uniform expansive.} We use the definition of uniform expansivity. Let $L$ be as in the statement of Lemma~\ref{TNEW1}.  Note that $L >1$. By the definition of uniform expansivity, there exits $n \in \N$ such that for all $\varphi \in \ell^p(\Z)$ with $\Vert \varphi \Vert _p = 1$, one of the following holds: \[ \Vert T_g^n \varphi \Vert_p^p \geq 4L \text{ or } \Vert T_g^{-n} \varphi \Vert_p^p \geq 4L.\]
Taking $\varphi = \frac{\chi_{\{i\}}}{ \nu(i)^{\frac{1}{p}}}$, we obtain
\[\Vert T_g^n \varphi \Vert_p^p= \dfrac{\nu(g^{-n}(i))}{\nu(i)} \ \ \ \ \& \ \ \ \  \Vert T_g^{-n} \varphi \Vert_p^p= \dfrac{\nu(g^{n}(i))}{\nu(i)}.\]
Recalling that $\dfrac{\nu(g^{k}(i))}{\nu(i)} =\dfrac{\mu(f^{k+i}(W))}{\mu(f^i(W))},$  we have that, for each $i \in \Z$, either
\[ \dfrac{\mu(f^{n+i}(W))}{\mu(f^i(W))}  \ge 4L \ \ \ \text{or} \ \ \ \dfrac{\mu(f^{-n+i}(W))}{\mu(f^i(W))}  \ge 4L.
\] 
Let $N_+$ be the set of $i \in \Z$ such that the first holds and $N_-$ be the set of $i \in \Z$ where the second holds. Clearly, $\Z = N_+ \cup N_-$. 

Let $\varphi \in L^p(X) $ with $\Vert \varphi \Vert_p=1$. It will suffice to show that either
\[ \Vert T_f^n  \varphi \Vert _p^p > 2  \ \ \ \text{or } \ \ \ \ \Vert T_f^{-n}  \varphi \Vert _p^p > 2.
\] As $\Vert \varphi \Vert_p=1$, then either 
\[ \sum_{i \in \N_+} {\int_{f^i(W)} \vert \varphi \vert^p d \mu} \ge  1/2   \ \ \ \ \text{or}  \ \ \sum_{i \in \N_-} {\int_{f^i(W)} \vert \varphi \vert^p d \mu} \ge  1/2. \] 
Without loss of generality, assume that the first holds. Now, 
using Lemma \ref{TNEW1}, we have
\begin{align*}
    \Vert T_f^n \varphi \Vert_p^p= \int_X \vert \varphi \vert ^p \circ f^n d\mu & \geq \sum_{i \in N_+}    \int_{f^{-n+i}(W)} \vert \varphi \vert ^p \circ f^n d\mu \\
& \geq \dfrac{1}{L}\sum_{i \in N_+}   \frac{\mu(f^{-n+i}(W))}{\mu(f^i(W))} \left(\int_{f^i(W)} \vert \varphi \vert^p d \mu \right)\\
& \geq \frac{1}{L} 4L \sum_{i \in N_+}    \left(\int_{f^i(W)} \vert \varphi \vert^p d \mu \right)\\
& \geq 2.
\end{align*}

\section{Proof for the Shadowing Property}
By Corollary SC of  \cite{D'AnielloDarjiMaiuriello}, we have that $T_f$ has the shadowing property if and only if 
one of Conditions $\hc{}$, $\hd{}$ or $\gh{}$ in it is satisfied.

By Theorem~18 of  \cite{BernardesMessaoudiETDS2020}, we have that $B_w$ has the shadowing property if and only if one of Conditions (A), (B) or (C) in it is satisfied.

As
 \[ w_{k} =  \left( \frac{\mu(f^{k-1}(W))}{\mu(f^{k}(W))}\right)^{\frac{1}{p}},\] 
we have that Condition~$\hc{}$ is equivalent to Condition (A), $\hd{}$ is equivalent to Condition (B) and  $\gh{}$ is equivalent to Condition (C), implying that $T_f$ has the shadowing property if and only if $B_w$ does.  

\section*{Acknowledgment}
We would like to thank Nilson Bernardes for his valuable comments which improved the article. \\
This research has been partially supported by the INdAM group GNAMPA ``Gruppo Nazionale per l’Analisi Matematica, la Probabilità e le loro Applicazioni'', and the project Vain-Hopes within the program VALERE; and partially accomplished within the UMI group TAA ``Approximation Theory and Applications''.

\bibliography{biblio}
\bibliographystyle{siam}
\Addresses
\end{document}